\theoremstyle{plain}
\newtheorem{thm}{Theorem}[section]
\newtheorem{prop}[thm]{Proposition}
\newtheorem{conj}[thm]{Conjecture}
\newtheorem{que}[thm]{Question}
\newtheorem{prb}[thm]{Problem}
\theoremstyle{definition}
\newtheorem{re}[thm]{Remark}
\newcommand{\CC}{{\mathbb C}}
\newcommand{\RR}{{\mathbb R}}
\newcommand{\NN}{{\mathbb N}}
\def\sgn{{\rm sgn}\,}
\newcommand{\cha}{\operatorname{char}}
\newcommand{\Wedge}{\bigwedge\nolimits}
\newcommand{\ELS}{\mathrm{ELS}}
\newcommand{\OLS}{\mathrm{OLS}}
\newcommand{\RBC}{\mathrm{RBC}}
\newcommand{\OBC}{\mathrm{OBC}}
\newcommand{\ATLSC}{\mathrm{ATLSC}}
\newcommand{\dett}{\mathrm{det}_{n-1}}
\begin{document}

\title{An online version of Rota's basis conjecture}
\author{Guus P.~Bollen and Jan Draisma}
\address{
Department of Mathematics and Computer Science\\
Technische Universiteit Eindhoven\\
P.O. Box 513, 5600 MB Eindhoven, The Netherlands}
\thanks{JD is supported by a Vidi grant from the Netherlands
Organisation for Scientific Research (NWO)}
\email{g.p.bollen@tue.nl, j.draisma@tue.nl}

\begin{abstract}
Rota's basis conjecture states that in any square array of vectors whose
rows are bases of a fixed vector space the vectors can be rearranged
within their rows in such a way that afterwards not only the rows are
bases, but also the columns. We discuss an {\em online version} of this
conjecture, in which the permutation used for rearranging the vectors in
a given row must be determined without knowledge of the vectors further
down the array. The paper contains surprises both for those who believe
this {\em online basis conjecture} at first glance, and for
those who disbelieve it.
\end{abstract}

\maketitle

\section{Background}

Fix a field $K$ and $n \in \NN$. The following conjecture
first appeared in \cite{Huang94}.

\begin{conj}[Rota's basis conjecture, RBC$_n(K)$]
Let $V=K^n$ and let $v=(v_{ij})_{ij}$
be an $n \times n$-array of vectors in $V$. Suppose that for all
$i=1,\ldots,n$ the $i$-th row $(v_{i,1},\ldots,v_{i,n})$
of $v$ is a basis of $V$. Then there exist permutations
$\pi_1,\ldots,\pi_n \in S_n$ such that for all $j=1,\ldots,n$ the vectors
$v_{1,\pi_1(j)},\ldots,v_{n,\pi_n(j)}$ form a basis, as well.
\end{conj}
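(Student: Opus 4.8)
The plan is to reduce RBC$_n(K)$ to a purely combinatorial statement about disjoint transversal bases, and then to attack that statement by a different route according to the characteristic of $K$.

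First, I would reformulate the goal. Writing $B_i=\{v_{i,1},\ldots,v_{i,n}\}$ for the $i$-th row, the desired conclusion is equivalent to partitioning the disjoint union $B_1\sqcup\cdots\sqcup B_n$ into sets $C_1,\ldots,C_n$ that are each a basis of $V$ and that each meet every $B_i$ in exactly one vector; call such a $C_j$ a transversal basis. So the task is to produce $n$ pairwise disjoint transversal bases, and by a counting argument it in fact suffices to find $n-1$ of them, since the leftover vectors then automatically form the last one.

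Second, over a field of characteristic $0$ --- in particular $K=\RR$ or $K=\CC$ --- I would invoke the Alon--Tarsi route of Huang and Rota \cite{Huang94} (see also Onn): RBC$_n(K)$ in characteristic $0$ follows once one knows that for even $n$ the number of even Latin squares of order $n$ differs from the number of odd ones. That instance of the Alon--Tarsi conjecture I would try to settle by a sign-reversing involution on the non-fixed Latin squares, reducing matters to a signed count that is then evaluated through a permanent-type polynomial identity; this is, in outline, how Drisko and Glynn dispose of $n=p\pm1$ for $p$ prime. For a general field I would instead maintain a partial collection of disjoint transversal bases and try to enlarge it by augmenting-path moves, repeatedly using the basis-exchange axiom to swap a vector of some $B_i$ out of one partial basis and into another, hoping that any obstruction to augmenting yields a tight set whose structure --- via a deletion--contraction induction on the underlying matroid, as in the Geelen--Humphries treatment of paving matroids --- contradicts the hypothesis that every row is a basis.

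I should be candid that this is really a menu of partial attacks rather than a proof: RBC$_n(K)$ is a famous open problem, the Alon--Tarsi conjecture is itself open for general even $n$, and the exchange/augmenting approach has so far only delivered $cn$ disjoint transversal bases for constants $c<1$ (work of Buci\'c, Kwan, Pokrovskiy and Sudakov and subsequent refinements). The genuinely hard step is extracting the \emph{last} transversal basis --- equivalently, ruling out the single global parity-type obstruction that every local exchange argument is blind to --- and it is precisely there that all current techniques stall; in the present paper RBC$_n(K)$ is invoked only as motivation for the online variant, not proved.
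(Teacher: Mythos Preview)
Your assessment is correct: the statement is a \emph{conjecture}, not a theorem, and the paper does not prove it. RBC$_n(K)$ is stated only as background and motivation for the online variant $\OBC_n(K)$, and the paper's actual results (Theorems~\ref{thm:Even} and~\ref{thm:Odd}) concern that online variant. Your final paragraph captures this exactly, and your survey of partial approaches (Alon--Tarsi via Huang--Rota/Onn, exchange arguments, the Buci\'c--Kwan--Pokrovskiy--Sudakov bound) is an accurate picture of the state of the art. There is nothing to compare your proposal against, because no proof exists in the paper or in the literature.
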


Here the vectors $v_{1,\pi_1(j)},\ldots,v_{n,\pi_n(j)}$ form the $j$-th
{\em column} of the array obtained from $v$ by applying, for each
$i=1,\ldots,n$, the $i$-th permutation $\pi_i$ to the $i$-th row. $\RBC$
has a natural generalisation to arbitrary matroids, and there is quite
some literature on this generalisation: e.g., it is true in rank up to
three \cite{Chan95}, for paving matroids \cite{Geelen06}, and a
version for $k \times n$-arrays with $n>\binom{k+1}{2}$ was proved in
\cite{Geelen07}. In this paper, however, we restrict ourselves to the
vector space version except for a brief remark in the last
section.

It is well known that $\RBC_n$ is related to a conjecture by Alon and
Tarsi on Latin squares. Recall that a {\em Latin square of order
$n$} is an $n \times n$-array $a=(a_{ij})_{ij}$ with all $a_{ij} \in
[n]:=\{1,\ldots,n\}$ such that every element of $[n]$ occurs precisely once in
each row and each column. The map $j \mapsto a_{ij}$ is then an element of
the symmetric group $S_n$ for each fixed $i$, and so is the map $i \mapsto
a_{ij}$ for each fixed $j$, and the {\em sign} of $a$ is by definition
the product of the signs of these $2n$ permutations. A Latin square is
called {\em even} if its sign is $1$ and {\em odd} if its sign is $-1$.
The number of even Latin squares is denoted $\ELS(n)$, and the number
of odd Latin squares is denoted $\OLS(n)$. If $n$ is odd and $>2$,
then interchanging rows $1$ and $2$ gives a sign-reversing involution
on the set of Latin squares, which proves that $\ELS(n)=\OLS(n)$.
The following conjecture from \cite{Alon92} on the case of even $n$
has various reformulations \cite{Janssen95}.

\begin{conj}[Alon and Tarsi's Latin square conjecture, ATLSC$_n$]
If $n$ is even, then $\ELS(n) \neq \OLS(n)$.
\end{conj}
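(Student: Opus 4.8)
The statement $\ATLSC_n$ is a notorious open problem, so what follows is the line of attack I would pursue, together with an honest account of where it stalls; it is the strategy underlying all partial progress on the conjecture.

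First I would trade the unmanageable difference $\ELS(n)-\OLS(n)$ --- a difference of two astronomically large, nearly equal integers --- for a single algebraic invariant whose non-vanishing is equivalent to $\ATLSC_n$. There is a classical such reformulation, essentially a special case of the Alon--Tarsi theorem relating colourings and orientations: $\ELS(n)-\OLS(n)$ equals, up to an explicit nonzero rational factor, the coefficient of the monomial $\prod_{i,j}x_{ij}^{n-1}$ in the product of the $2n$ Vandermonde determinants built from the rows and columns of an $n\times n$ array of indeterminates $(x_{ij})$; equivalently it is a signed count of Eulerian sub-digraphs of the rook's graph of the $n\times n$ grid, and it can also be written as a permanent. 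The point of this reduction is that the sign of a Latin square --- the product of the signs of its $n$ row-permutations and its $n$ column-permutations --- is exactly the bookkeeping that this coefficient performs automatically, so that after fixing the first row of the Latin square to be the identity one is left asking whether a single completely explicit integer is nonzero. I would fix whichever of these equivalent avatars best suits the next step.

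Next I would try to prove that integer nonzero by producing a prime $p$ modulo which it does not vanish. The tool for this is a group action: on the set of normalised Latin squares of order $n$ one lets a cyclic group of order $p$ act --- for instance by cyclically permuting the symbols, the columns and the non-first rows in a coordinated way --- so that the sign of a Latin square is constant on each orbit; orbit counting then yields $\ELS(n)-\OLS(n)\equiv(\text{signed number of fixed Latin squares})\pmod p$. The fixed configurations are highly rigid, being in essence the Latin squares on which $\mathbb{Z}/p$ acts regularly, so the right-hand side can be enumerated by hand and shown to be a small, explicitly computable, nonzero residue. This is exactly how one obtains $\ATLSC_n$ whenever $n-1$ is prime; a variant of the action, coordinatising instead by $\mathbb{Z}/p$ with $p=n+1$, covers the case $n+1$ prime. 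Together these settle infinitely many even $n$, and the remaining small even $n$ have been confirmed by direct computation of the invariant.

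The step I expect to be the real obstacle is this last one, because the programme has no uniform version: for a general even $n$ there is no prime canonically attached to the problem, and the raw cancellation in $\ELS(n)-\OLS(n)$ is far too severe to estimate head-on. Indeed, for even $n$ the sign-reversing involution that disposes of the odd case --- swapping two rows --- becomes sign-\emph{preserving}, precisely because it multiplies the sign by $(-1)^n=1$, so nothing in the argument for odd $n$ transfers. A complete proof would have to supply a genuinely new handle on the balanced coefficient of the first step --- for example a degeneration, or a decomposition of the underlying representation of the wreath product $S_n\wr S_n$, or an analysis of the symmetric functions governing the Vandermonde expansion --- that isolates a provably nonzero contribution; alternatively one could hope for an induction linking order $n$ to order $n-2$ or $n/2$. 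I do not currently see how to produce such a handle, so the realistic outcome of this plan is a reproof of $\ATLSC_n$ for the infinitely many $n$ with $n\pm1$ prime, which is the present state of the art.
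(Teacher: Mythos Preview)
The paper does not prove this statement: it is recorded there as an open conjecture, with a pointer to the literature for the known partial results ($n=p+1$ and $n=p-1$ for $p$ an odd prime). Your proposal is not a proof either, and you say so plainly; what you have written is an accurate survey of the standard reformulations (the Vandermonde-coefficient/permanent avatar) and of the orbit-counting mechanism behind the $n=p\pm1$ results, together with a candid assessment of why the method stalls for general even $n$. That matches the state of the art as the paper reports it, so there is nothing to compare and no gap to flag beyond the one you already acknowledge: no uniform prime or structural handle is known that would force the invariant to be nonzero for all even $n$.
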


The known relation between RBC$_n(K)$ and ATLSC$_n$ is the following,
and has been proved by various authors \cite{Huang94,Onn97,Wild94}.

\begin{thm}\label{thm:ATLSCRBC}
If the characteristic of $K$ does not divide $\ELS(n)-\OLS(n)$, then
$\RBC_n(K)$ holds. In particular, if the characteristic of $K$ is zero
or sufficiently large, then $\ATLSC_n$ implies $\RBC_n(K)$.
\end{thm}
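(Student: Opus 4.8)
The plan is to establish $\RBC_n(K)$ by exhibiting an exact \emph{determinantal identity} that expresses $\ELS(n)-\OLS(n)$ through the array $v$. Precisely, I would prove that over any field $K$, for every $n$ and every $n\times n$ array $v=(v_{ij})$ of vectors in $K^n$,
\[
\sum_{(\pi_1,\ldots,\pi_n)\in S_n^n}\Big(\prod_{i=1}^n\sgn\pi_i\Big)\prod_{j=1}^n\det[v_{1,\pi_1(j)},\ldots,v_{n,\pi_n(j)}]=\big(\ELS(n)-\OLS(n)\big)\prod_{i=1}^n\det[v_{i,1},\ldots,v_{i,n}],
\]
where each determinant is that of the matrix having the listed vectors as columns. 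Granting this, the theorem is immediate. If $\cha K$ does not divide $\ELS(n)-\OLS(n)$ and every row of $v$ is a basis of $V=K^n$, then on the right each $\det[v_{i,1},\ldots,v_{i,n}]$ is a nonzero scalar and $\ELS(n)-\OLS(n)$ is nonzero in $K$, so the right-hand side is nonzero; hence some summand on the left is nonzero, which says that for that particular tuple $(\pi_1,\ldots,\pi_n)$ we have $\det[v_{1,\pi_1(j)},\ldots,v_{n,\pi_n(j)}]\neq0$ for all $j$, i.e.\ every column of the rearranged array is a basis. That is exactly $\RBC_n(K)$. The final assertion then follows because $\cha K=0$ divides no nonzero integer, and $\cha K=p$ fails to divide the nonzero integer $\ELS(n)-\OLS(n)$ once $p$ exceeds its absolute value; in either case $\ATLSC_n$, which for even $n$ says exactly that this integer is nonzero, hands us the hypothesis of the first part.

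The real work is the identity, and I would prove it by reduction to standard basis vectors. Both sides are multilinear in the $n^2$ vectors $v_{ij}$: on the right this is clear, since $\det[v_{i,1},\ldots,v_{i,n}]$ is multilinear in its columns and involves only row $i$; on the left, inside any single summand the vector $v_{i,m}$ occurs in exactly one factor — the one with $j=\pi_i^{-1}(m)$ — and there only linearly. So it suffices to check the identity when each $v_{ij}$ is a standard basis vector $e_{c(i,j)}$, for an arbitrary $c\colon[n]\times[n]\to[n]$. If some map $j\mapsto c(i_0,j)$ is not injective, say $c(i_0,m_1)=c(i_0,m_2)$ with $m_1\neq m_2$, then the right-hand side vanishes because $\det[e_{c(i_0,1)},\ldots,e_{c(i_0,n)}]=0$, and the left-hand side vanishes because $(\pi_1,\ldots,\pi_n)\mapsto(\pi_1,\ldots,(m_1\,m_2)\circ\pi_{i_0},\ldots,\pi_n)$ is a fixed-point-free involution of $S_n^n$ that negates the weight $\prod_i\sgn\pi_i$ while preserving every product of determinants: the only two columns it disturbs have their $i_0$-th entry swapped between $v_{i_0,m_1}$ and $v_{i_0,m_2}$, which are the same vector.

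It remains to treat the case where every map $\gamma_i\colon j\mapsto c(i,j)$ is a permutation in $S_n$. Then $\det[v_{1,\pi_1(j)},\ldots,v_{n,\pi_n(j)}]=\det[e_{\gamma_1(\pi_1(j))},\ldots,e_{\gamma_n(\pi_n(j))}]$ is $0$ unless $i\mapsto\gamma_i(\pi_i(j))$ is a permutation, in which case it equals its sign. Setting $M_{ij}:=\gamma_i(\pi_i(j))$, the rows of $M$ are automatically permutations, so the entire product of determinants is nonzero exactly when every column of $M$ is a permutation too, i.e.\ when $M$ is a Latin square; and $M\mapsto(\pi_1,\ldots,\pi_n)$ with $\pi_i=\gamma_i^{-1}\circ M_{i,\cdot}$ (where $M_{i,\cdot}$ denotes the permutation $j\mapsto M_{ij}$) is a bijection from Latin squares onto the contributing tuples. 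A direct computation with signs — using $\sgn\pi_i=\sgn\gamma_i\cdot\sgn M_{i,\cdot}$ and the definition of the sign of a Latin square as the product of its row and column signs — then shows that the summand attached to $M$ equals $\big(\prod_i\sgn\gamma_i\big)\sgn(M)$, so the left-hand side becomes $\big(\prod_i\sgn\gamma_i\big)\sum_M\sgn(M)=\big(\prod_i\sgn\gamma_i\big)\big(\ELS(n)-\OLS(n)\big)$; and the right-hand side specializes to the same thing, since $\det[e_{\gamma_i(1)},\ldots,e_{\gamma_i(n)}]=\sgn\gamma_i$.

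The main obstacle is finding, and correctly signing, this identity. A naive unsigned variant is \emph{not} a configuration-independent multiple of $\ELS(n)-\OLS(n)$ — e.g.\ the coefficient of $\prod_{ij}x_{ij}$ in $\det[D_1x_1,\ldots,D_nx_n]^n$, with $x_i$ a vector of indeterminates and $D_i$ the matrix whose columns are $v_{i,1},\ldots,v_{i,n}$, depends genuinely on $v$ and can vanish even when $\RBC$ holds — so no such shortcut is available. It is the sign weights $\prod_i\sgn\pi_i$ that make the left-hand side collapse, via the Latin-square bijection, onto the Alon--Tarsi quantity, and the one place that needs genuine care is the sign bookkeeping in the permutation case, together with confirming that the degenerate-case involution is indeed fixed-point-free and determinant-preserving. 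The rest of the argument is formal.
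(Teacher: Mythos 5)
Your argument is correct: the identity you state is the classical ``colorful determinantal'' identity (essentially the one in \cite{Onn97,Huang94}), your multilinearity reduction to standard basis vectors is sound, the transposition involution does kill the degenerate terms, and the sign bookkeeping in the Latin-square case checks out (the summand attached to $M$ is $\sgn(M)\prod_i\sgn\gamma_i$, matching the specialised right-hand side). The paper, however, does not prove Theorem~\ref{thm:ATLSCRBC} this way: it treats the theorem as known and instead derives it from the strictly stronger online statement, Theorem~\ref{thm:Even}. There the same algebraic content is \emph{factored row by row}: one defines multilinear forms $F_k$ on $(\Wedge^k V)^n$ by $F_n=\det^{\otimes n}$ and $F_{k-1}=\Theta_k F_k$, with the key relation that $\sum_{\pi}\sgn(\pi)\,\Phi_{k,\pi}^*F_k=F_{k-1}\otimes\det$; composing these for $k=n,\dots,1$ recovers exactly your global identity, with $F_0=\ELS(n)-\OLS(n)$ playing the role of your scalar. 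What the factored version buys is the online algorithm: nonvanishing of $F_{k-1}$ at the current $n$-tuple of column spaces certifies that, for \emph{any} next basis, some permutation keeps $F_k$ nonzero, so the permutations can be chosen greedily without seeing future rows. Your global identity only tells you that some full tuple $(\pi_1,\dots,\pi_n)$ works, i.e.\ the offline statement $\RBC_n(K)$ --- which is all Theorem~\ref{thm:ATLSCRBC} asserts, so as a proof of that theorem your shorter, self-contained route is perfectly adequate; it just does not extend to Theorem~\ref{thm:Even}.
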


$\ATLSC_n$ has been proved for $n$ of the form $p+1$ \cite{Drisko97}
and for $n$ of the form $p-1$ \cite{Glynn10}, both with $p$ an odd
prime (see also \cite{Berndsen12} for easy, self-contained proofs).
The paper \cite{Zappa97} contains a claim to the effect that $\ATLSC_m$
implies $\ATLSC_{2m}$. As pointed out in \cite{Glynn10}, the proof is
incorrect. However, in Section~\ref{sec:Even} we show that $\ATLSC_m$
does imply (an online strengthening of) a variant of $\RBC$ for $(m+1)
\times n$-arrays of vectors with $n>m$ a multiple of $m$. 

There is a generalisation of $\ATLSC$ in which one counts only the
(signs of the) Latin squares whose diagonal entries are fixed to be, say,
$n$ \cite{Zappa97}.  While for even $n$ this generalised conjecture is
equivalent to $\ATLSC_n$, the generalised conjecture is also meaningful
for odd $n$, and was proved in \cite{Drisko98} for $n=p$ prime.
In Section~\ref{sec:Odd} we establish a connection to a version of
$\RBC_n$ where all bases share a common vector. This is different from
the result of \cite{Aharoni11}, which links a variant of $\ATLSC$ to a
different weakening of $\RBC$.

\section{Online version, main results}

This paper concerns the following natural strengthening of
$\RBC_n(K)$.

\begin{que}[Online version of Rota's basis conjecture, $\OBC_n(K)$]
Does there exist an algorithm for finding the permutations $\pi_i$
in $\RBC_n(K)$ which is {\em online} in the following sense? The rows
$(v_{1j})_j,(v_{2j})_j,\ldots,(v_{nj})_j$ of $v$ are given sequentially
to the algorithm, and directly after reading the $i$-th row the algorithm
fixes permutation $\pi_i$, without knowledge of the remaining $n-i$ rows.
\end{que}

For maximum effect, we suggest that the reader take a few minutes
to determine his own position on this conjecture. Here are some
considerations s/he might take into account: certainly $\pi_1$ can be
fixed in any manner, and then the second row can be adapted to the first,
so $\OBC_2(K)$ is equivalent to $\RBC_2(K)$---and true, of course. Also,
the online problem is reminiscent of a very easy instance of the problem
of completing a partially filled Latin square: indeed, if all the bases
given to the algorithm are equal to the standard basis $e_1,\ldots,e_n$
of $V=K^n$, then it does not matter how $\pi_1,\ldots,\pi_i$ are chosen,
as long as they do not put twice the same standard basis vector in
any column. The $i \times n$ rectangle thus obtained can always be
completed to a Latin square; one way to see this is to use the known
fact that a regular bipartite graph with non-empty set of edges always
contains a perfect matching. On the negative side, with more general
vectors than just standard basis vectors one can run into
trouble, e.g. as
follows. In this $2 \times 4$-array of vectors in $V$: \begin{align*}
v_{11}&=e_1 & v_{12}&=e_2 & v_{13}&=e_3 & v_{14}&=e_4\\ v_{21}&=e_2 &
v_{22}&=e_1+e_2 & v_{23}&=e_1+e_3 & v_{24}&=e_1+e_4 \end{align*} the
two rows are bases, and the four {\em column spaces} \begin{align*}
V_1&:=\langle e_1,e_2 \rangle, &V_2&:=\langle e_2,e_1+e_2\rangle,&
V_3&:=\langle e_3,e_1+e_3 \rangle,& V_4&:=\langle e_4,e_1+e_4 \rangle
\end{align*} span two-dimensional spaces, which however all intersect
in $\langle e_1 \rangle$. Hence any online algorithm would have to avoid
this arrangement, because the next, third basis might contain the vector
$e_1$. More generally, an online algorithm should avoid any $i \times
n$-arrangement where the intersection of $\ell$ of the $i$-dimensional
column spaces $V_1,\ldots,V_n$ constructed so far has dimension strictly
greater than $n-\ell$. Indeed, if $U:=V_1 \cap \cdots \cap V_\ell$ has
dimension $>n-\ell$, then an $(i+1)$-st basis of $V$ containing $>n-\ell$
elements of $U$ cannot be matched with $V_1,\ldots,V_n$. Conversely, if
no $\ell$-intersection has dimension $>n-\ell$, Hall's marriage theorem
ensures that any $(i+1)$-st basis can be matched. One might try and
make these ``general position'' conditions more strict and dependent on
$i$, so that given any tuple of $i$-dimensional spaces $V_1,\ldots,V_n$
satisfying the conditions for $i$ and given any basis $v_1,\ldots,v_n$,
a permutation $\pi$ exists such that $V_1 + \langle v_{\pi(1)} \rangle,
\ldots, V_n + \langle v_{\pi(n)} \rangle$ are all $(i+1)$-dimensional and
satisfy the next set of conditions. But very quickly these conditions
seem to grow rather intricate, and in Section~\ref{sec:Open} we will
see that in fact they cannot be formulated purely in matroidal terms.

By now the reader may have formed his own opinion about the answer to
$\OBC_n$. Ready for our results? Here we go!

\begin{thm} \label{thm:Even}
If the characteristic of $K$ does not divide $\ELS(n)-\OLS(n)$, then
$\OBC_n(K)$ holds.
\end{thm}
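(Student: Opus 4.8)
The plan is to refine the polynomial identity behind Theorem~\ref{thm:ATLSCRBC} into a \emph{dimension-by-dimension} version that can be evaluated after reading only part of the array. Identify $V=K^n$ with its standard basis $e_1,\dots,e_n$ and let $\langle\,\cdot\,\rangle\colon\Wedge^n V\to K$ pick out the coefficient of $e_1\wedge\dots\wedge e_n$. For $0\le d\le n$ define a multilinear map $q_d\colon(\Wedge^d V)^n\to K$ by
\[
q_d(\omega_1,\dots,\omega_n):=\sum_{\tau_{d+1},\dots,\tau_n\in S_n}\Bigl(\prod_{l=d+1}^n\sgn\tau_l\Bigr)\prod_{j=1}^n\bigl\langle\omega_j\wedge e_{\tau_{d+1}(j)}\wedge\dots\wedge e_{\tau_n(j)}\bigr\rangle,
\]
i.e.\ $q_d$ completes the ``column data'' $\omega_j$ with $n-d$ standard basis vectors and sums, with the appropriate sign, over all ways of matching those vectors to the $n$ columns. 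In the algorithm $\omega_j$ will be $v_{1,\pi_1(j)}\wedge\dots\wedge v_{d,\pi_d(j)}$, the wedge of the entries placed in column $j$ after reading $d$ rows, so $q_d$ only ever depends on the rows seen so far --- this is the whole point.

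Three facts drive the argument. First, $q_0(1,\dots,1)=\ELS(n)-\OLS(n)$: the summand for $(\tau_1,\dots,\tau_n)$ is nonzero exactly when the array $(\tau_l(j))_{l,j}$ is a Latin square, and then $\prod_l\sgn\tau_l$ together with $\prod_j\langle e_{\tau_1(j)}\wedge\dots\wedge e_{\tau_n(j)}\rangle=\prod_j\sgn(l\mapsto\tau_l(j))$ is precisely the product of the $2n$ row- and column-signs defining $\sgn$ of that Latin square. Second, for $d=n$ the sum over $\tau$'s is empty, so $q_n(\omega_1,\dots,\omega_n)=\prod_{j=1}^n\langle\omega_j\rangle$; in particular if $\omega_j=v_{1,\pi_1(j)}\wedge\dots\wedge v_{n,\pi_n(j)}$ then $q_n=\prod_j\det[v_{1,\pi_1(j)}\mid\dots\mid v_{n,\pi_n(j)}]$. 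Third, the key \emph{propagation identity}: for every basis $b_1,\dots,b_n$ of $V$ and all $\omega_j\in\Wedge^d V$,
\[
\sum_{\pi\in S_n}\sgn(\pi)\,q_{d+1}\bigl(\omega_1\wedge b_{\pi(1)},\dots,\omega_n\wedge b_{\pi(n)}\bigr)=\det(b_1\mid\dots\mid b_n)\cdot q_d(\omega_1,\dots,\omega_n).
\]
This follows by writing $b_k=\sum_m c_{mk}e_m$, expanding the left side by multilinearity of $q_{d+1}$, and observing that $\sum_\pi\sgn(\pi)\prod_j c_{m_j,\pi(j)}$ is the determinant of a row-permutation of $(c_{mk})$: it vanishes unless $(m_1,\dots,m_n)$ is a permutation $m$, when it equals $\sgn(m)\det(b)$; collecting the surviving terms and recognising the result as $\det(b)$ times the ``split off the first completion step'' rewriting of $q_d$ finishes it.

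Granting these three facts, the online algorithm is forced. Before reading any row the state is $(\omega_1,\dots,\omega_n)=(1,\dots,1)$ and $q_0=\ELS(n)-\OLS(n)\ne 0$ in $K$ by hypothesis. Suppose $\pi_1,\dots,\pi_i$ have been committed, $\omega_j=v_{1,\pi_1(j)}\wedge\dots\wedge v_{i,\pi_i(j)}$, and $q_i(\omega_1,\dots,\omega_n)\ne 0$. On reading the basis $v_{i+1,1},\dots,v_{i+1,n}$, the propagation identity gives
\[
\sum_{\pi\in S_n}\sgn(\pi)\,q_{i+1}\bigl(\omega_1\wedge v_{i+1,\pi(1)},\dots,\omega_n\wedge v_{i+1,\pi(n)}\bigr)=\det(v_{i+1,1}\mid\dots\mid v_{i+1,n})\cdot q_i(\omega_1,\dots,\omega_n)\ne 0,
\]
so some $\pi=:\pi_{i+1}$ makes the new state satisfy $q_{i+1}\ne 0$; the algorithm commits to it, using only rows $1,\dots,i+1$. (Multilinearity of $q_{i+1}$ also forces $\omega_j\wedge v_{i+1,\pi_{i+1}(j)}\ne 0$ for each $j$, so every column stays linearly independent and the dimension climbs by one.) After $n$ rows, $q_n(\omega_1,\dots,\omega_n)\ne 0$ with $\omega_j=v_{1,\pi_1(j)}\wedge\dots\wedge v_{n,\pi_n(j)}$, which by the second fact forces $\det[v_{1,\pi_1(j)}\mid\dots\mid v_{n,\pi_n(j)}]\ne 0$ for every $j$: all columns are bases.

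The main obstacle is not any single computation but hitting on the right invariant. The naive Alon--Tarsi sum $\sum_{\pi_1,\dots,\pi_n}(\prod_i\sgn\pi_i)\prod_j\det B_j$ equals $\pm(\ELS(n)-\OLS(n))\prod_i\det A_i$ and proves Theorem~\ref{thm:ATLSCRBC}, but its partial sums over $\pi_{i+1},\dots,\pi_n$ depend on the rows not yet read, so they are useless to an online player. Completing instead with the \emph{fixed} standard basis turns those partial sums into the intrinsic, offline-computable functionals $q_d$, which still specialise to $\ELS(n)-\OLS(n)$ at the bottom and obey a clean basis-independent propagation law. Once this is in place, the remaining work --- the Latin-square bookkeeping for $q_0$ and the determinant manipulation for the propagation identity --- is routine.
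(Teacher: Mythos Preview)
Your proof is correct and follows essentially the same approach as the paper's. Your multilinear functionals $q_d$ coincide with the paper's $F_d$ (the paper builds them top--down via operators $\Theta_k$ applied to $F_n=\det^{\otimes n}$, you write them bottom--up as signed sums over completions by the standard basis), and your propagation identity is exactly the paper's relation $\sum_\pi\sgn(\pi)\,\Psi_{k,\pi}^*F_k=F_{k-1}\otimes\det$; the online algorithm and the endpoint computations $q_0=\ELS(n)-\OLS(n)$, $q_n=\prod_j\langle\omega_j\rangle$ match as well.
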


In particular, in the cases where $\RBC_n(K)$ holds as a consequence
of $\ATLSC_n$ (hence with $n$ even), in fact the stronger statement
$\OBC_n(K)$ holds.

\begin{thm} \label{thm:Odd}
For any odd $n>2$ and any field $K$ that contains a primitive
$m$-th root of unity for all odd $m \leq n$, $\OBC_n(K)$ is false.
\end{thm}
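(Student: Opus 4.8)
The plan is to refute $\OBC_n(K)$ by exhibiting an \emph{adversary}: given any candidate online algorithm, the adversary reveals the rows of the array one at a time, each a basis of $K^n$, choosing the $(i{+}1)$-st row only after the algorithm has committed to $\pi_1,\dots,\pi_i$, and does so in such a way that the partial array eventually becomes \emph{dead} --- meaning, as in the discussion preceding Theorem~\ref{thm:Even}, that some $\ell$ of the current column spaces $V_1,\dots,V_n$ meet in a subspace $U$ with $\dim U>n-\ell$. Once this happens the adversary presents one further basis containing $n-\ell+1$ independent vectors of $U$; by Hall's marriage theorem this basis cannot be matched to the column spaces, so no choice of the remaining permutations makes all columns bases and the algorithm fails. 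Thus it suffices to produce such an adversary, and the content of the theorem is that the hypothesis on roots of unity in $K$ makes this possible.

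The case $n=3$ already exhibits the mechanism. Let the adversary play $v^{(1)}=(e_1,e_2,e_3)$; after $\pi_1$ is fixed, relabel coordinates so that $\pi_1=\mathrm{id}$, whence $V_j=\langle e_j\rangle$. With $\omega$ a primitive cube root of $1$, the adversary now plays the Vandermonde basis $v^{(2)}=\bigl((1,1,1),\,(1,\omega,\omega^{2}),\,(1,\omega^{2},\omega)\bigr)$. For an arbitrary $\pi_2=\sigma\in S_3$, set $\zeta_k=\omega^{k-1}$ and $(a,b,c)=(\sigma(1),\sigma(2),\sigma(3))$; then $w:=(1,\zeta_c,\zeta_a\zeta_c)$ is a nonzero vector lying in all three column spaces $\langle e_j,v^{(2)}_{\sigma(j)}\rangle$. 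Indeed, membership of $w$ in $\langle e_j,v^{(2)}_{\sigma(j)}\rangle$ just says that $w$ is proportional to $v^{(2)}_{\sigma(j)}$ in the two coordinates different from $j$, and the three resulting scalar equations are compatible exactly because $a+b+c\equiv 0\pmod 3$. So after two rows the three $2$-dimensional column spaces all contain the line $\langle w\rangle$, the configuration is dead, and the adversary wins by playing as $v^{(3)}$ any basis of $K^3$ through $w$.

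For general odd $n$ the strategy is an adaptive, layered elaboration of this: the adversary again begins with the standard basis and then plays a sequence of bases built from Vandermonde blocks on primitive $m$-th roots of unity, where the scales $m$ run through the odd integers $3,5,\dots,n$, maintaining throughout an invariant on the column spaces which guarantees that after the last of these rows some $\ell$ of them are forced into a linear dependence of the dead type; the construction then concludes exactly as above. Oddness is essential here --- for even $n$ in the range where $\ATLSC_n$ applies, Theorem~\ref{thm:Even} shows $\OBC_n(K)$ is actually \emph{true} --- and it is the mechanism that keeps a cyclically structured subconfiguration available for the adversary to exploit whatever permutations the algorithm selects.

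The hard part is the general inductive step: one must specify each successive row so that (i) it is genuinely a basis of $K^n$, and (ii) the invariant is preserved no matter which of the $n!$ permutations the algorithm commits to. Point (ii) requires a case analysis of how the committed permutation interlocks with the Vandermonde structure, and it is precisely here that the hypotheses on $K$ are used --- to ensure the relevant Vandermonde-type minors are nonzero and that the character-sum compatibility conditions (the analogue of ``$a+b+c\equiv 0$'') close up at each scale. Producing a single explicit family of bases that is simultaneously basis-valued and robust against all of the algorithm's choices is where essentially all of the difficulty lies.
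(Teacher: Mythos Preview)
Your $n=3$ argument is correct and is essentially the paper's proof specialised to that case. But for general odd $n$ you have not given a proof: you describe a ``layered'' strategy with Vandermonde blocks at scales $3,5,\dots,n$ and an unspecified invariant, and then candidly say that producing bases robust against all $n!$ choices ``is where essentially all of the difficulty lies.'' That difficulty is never discharged, so the proposal is incomplete as it stands.

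The paper's actual argument avoids the whole multi-scale induction by a single combinatorial observation. The adversary first plays the standard basis \emph{$n-2$ times}, not once. If the algorithm has not yet erred, each column space $V_l$ is spanned by $n-2$ distinct $e_i$'s, hence is the orthogonal complement of exactly two standard basis vectors. Form the multigraph on $[n]$ with an edge $\{i,j\}$ for every $l$ such that $e_i,e_j\notin V_l$; this graph is $2$-regular (each $e_i$ is missing from exactly two columns), hence a disjoint union of cycles, and since $n$ is odd some cycle has odd length $m$. Relabel so that $V_l$ misses $e_l,e_{l+1}$ for $l<m$ and $V_m$ misses $e_m,e_1$. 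Now the adversary plays a \emph{single} $(n-1)$-st basis, block-diagonal with an $m\times m$ Vandermonde block on a primitive $m$-th root $\zeta$ and any invertible $(n-m)\times(n-m)$ block. The first $m$ of these vectors lie in $\langle e_1,\dots,e_m\rangle$ and are already contained in $V_{m+1},\dots,V_n$, so the algorithm must distribute them among $V_1,\dots,V_m$. For any such assignment one checks, via a cyclic bidiagonal determinant, that the normal vectors of the resulting $(n-1)$-dimensional spaces $V_1',\dots,V_m'$ are linearly dependent: the determinant evaluates to $\zeta^{1+2+\cdots+m}-1=0$ because $m$ is odd. Hence $V_1'\cap\cdots\cap V_n'$ has positive dimension, and any $n$-th basis through a nonzero vector of that intersection defeats the algorithm.

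Two points to take away. First, the reason the hypothesis asks for primitive $m$-th roots for \emph{all} odd $m\le n$ is not a multi-scale construction but ignorance: the adversary does not control the cycle decomposition the algorithm produces after $n-2$ standard rows, so it must be prepared for any odd cycle length. Second, the trick of feeding many copies of the standard basis first forces enough combinatorial rigidity (the $2$-regular graph) that a single Vandermonde row finishes the job; your proposed one-standard-row-then-many-Vandermonde-rows route would require controlling far more of the algorithm's freedom, and you have not shown how.
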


Consequently, while $\OBC_n(\CC)$ is true for infinitely many and
conjecturally all {\em even} $n$, it is false for all {\em odd} $n$
except the trivial case $n=1$.  We were quite surprised by this dichotomy
between even $n$ and odd $n$. In the subsequent two sections of this
paper we prove Theorems~\ref{thm:Even} and~\ref{thm:Odd}, respectively,
and discuss some variants for other shapes of arrays. We conclude with
some further remarks in Section~\ref{sec:Open}.

\section{Even dimensions} \label{sec:Even}

In this section we prove Theorem~\ref{thm:Even}. This implies
Theorem~\ref{thm:ATLSCRBC}, and in fact our contribution may be
interpreted as giving more ``semantics'' to existing proofs of the
latter. Since the validity of $\OBC_n(K)$ for a field $K$ implies
its validity over any subfield of $K$, we assume that $K$ is
algebraically closed. We will use algebro-geometric terminology such
as {\em hypersurface} for a variety in some vector space over $K$
defined by the vanishing of a single non-constant
polynomial. We write $\Wedge^k V$
for $k$-th exterior power of $V$, which is the quotient of the $k$-fold
tensor power $V^{\otimes k}$ by the subspace spanned by all pure tensors
$v_1 \otimes \cdots \otimes v_k$ that have $v_i=v_j$ for some distinct
indices $i,j$. The image in $\Wedge^k V$ of $v_1 \otimes \cdots \otimes
v_k$ for arbitrary vectors $v_1,\ldots,v_k$ is denoted by $v_1 \wedge
\cdots \wedge v_k$. It is non-zero if and only if the $v_i$ are linearly
independent, and then for any other basis $w_1,\ldots,w_k$ of $\langle
v_1,\ldots,v_k \rangle$ the element $w_1 \wedge \cdots \wedge w_k$ is
a scalar multiple of $v_1 \wedge \cdots \wedge v_k$.  In this sense,
(images of) pure tensors modulo scalar factors bijectively represent
$k$-dimensional subspaces of $V$.

\begin{proof}[Proof of Theorem~\ref{thm:Even}.]
For $k=1,\ldots,n$ and $\pi \in S_n$ consider the morphism
of algebraic varieties
\[ \Psi_{k,\pi}:\left(\Wedge^{k-1} V\right)^n \times V^n \to
\left(\Wedge^k V\right)^n,\
	((\omega_j)_j,(u_j)_j) \mapsto (\omega_j \wedge
	u_{\pi(j)})_j,
\]
which models adding the $k$-th row $(u_1,\ldots,u_n)=(v_{kj})_j$ of the
array to the $n$-tuple of $(k-1)$-dimensional spaces constructed so far,
according to the permutation $\pi$.

Let $D \subseteq V^n$ denote the hypersurface, defined by the determinant
$\det \in (V^*)^{\otimes n}$, consisting of all $n$-tuples of vectors that
do {\em not} form a basis.  Assuming that the characteristic of $K$ does
not divide $\ELS(n)-\OLS(n)$, we will prove that for each $k=0,\ldots,n$
there exists a hypersurface $H_k$ in $(\Wedge^k V)^n$, the Cartesian
product of $n$ copies of the $k$-th exterior power of $V$, having the
following two properties:
\begin{enumerate}
\item each $H_k$ contains the set of tuples
$(\omega_1,\ldots,\omega_n) \in \left(\Wedge^k V\right)^n$
for which some $\omega_i$ is zero; and
\item $H_{k-1}$ and $H_k$ are related by \[ \bigcap_{\pi \in S_n}
(\Psi_{k,\pi}^{-1} H_k) \subseteq \left( H_{k-1} \times V^n \right) \cup
\left(\left(\Wedge^{k-1} V\right)^n \times D\right). \]
In other words, if for some $(\omega,u) \in (\Wedge^{k-1}
V)^n \times V^n$ it is true that for all $\pi \in S_n$,
$\Psi_{k,\pi}(\omega,u)$ is in $H_k$, then either $\omega$ is already in $H_{k-1}$ or $u$ is a linearly dependent system (or both).
\end{enumerate}
The latter condition implies that if $(\omega_1,\ldots,\omega_n)
\in (\Wedge^{k-1} V)^n$ lies outside $H_{k-1}$, then for any basis
$u_1,\ldots,u_n$ of $V$ there exists a permutation $\pi$
such that the tuple
$(\omega_1 \wedge u_{\pi(1)},\ldots,\omega_n \wedge
u_{\pi(n)})$ lies outside $H_k$.

The online algorithm is then as follows. Take any point
$(\omega_{0,1},\ldots,\omega_{0,n}) \in (\Wedge^0 V)^n$ outside
$H_0$. After having constructed $(\omega_{k-1,1},\ldots,\omega_{k-1,n}) \in
(\Wedge^{k-1} V)^n \setminus H_{k-1}$ for $k \geq 1$ read the $k$-th
row $(v_{k1},\ldots,v_{kn})$ of the array and choose $\pi_{k}$ such
that $(\omega_{kj})_j:=\Psi_{k,\pi_k}((\omega_j)_j,(v_{kj})_j) \not \in
H_k$. This yields permutations $\pi_1,\ldots,\pi_n$ and an 
$n$-tuple of pure tensors $(\omega_{n,1},\ldots,\omega_{n,n})$ that are
all non-zero since the tuple lies outside $H_n$, and we are
done.

It remains to construct the $H_k$. For this it is convenient
to work with ordinary tensors first and then take the quotient to
arrive at exterior powers. Thus consider
\[ \Phi_{k,\pi}: (V^{\otimes k-1})^n \times V^n \to
(V^{\otimes k})^n,\ ((\omega_j)_j,(v_j)_j) \mapsto (\omega_j \otimes
v_{\pi(j)})_j \]
and its co-morphism, regarded as a map
\[ \Phi_{k,\pi}^*:
	((V^*)^{\otimes k})^{\otimes n} \to
	((V^*)^{\otimes k-1})^{\otimes n} \otimes
	(V^*)^{\otimes n}.
\]
More explicitly, for numbers
$a_{11},\ldots,a_{kn} \in [n]$ we use the notation
\begin{equation} \label{eq:Tensor}
	\begin{bmatrix}
		a_{11} & \ldots & a_{1n}\\
		\vdots & & \vdots \\
		a_{k1} & \ldots & a_{kn}
	\end{bmatrix}
\end{equation}
for the tensor product $(x_{a_{11}} \otimes \cdots \otimes x_{a_{k1}}) \otimes\cdots \otimes (x_{a_1n} \otimes \cdots \otimes x_{a_{kn}})$,
an element of $((V^*)^{\otimes k})^{\otimes n}$, where $x_1,\ldots,x_n
\in V^*$ are the coordinate functions on $V=K^n$ dual to the
standard basis $e_1,\ldots,e_n$. The linear map $\Phi_{k,\pi}^*$
sends \eqref{eq:Tensor} to
\[
	\begin{bmatrix}
		a_{11} & \ldots & a_{1n}\\
		\vdots & & \vdots \\
		a_{k-1,1} & \ldots & a_{k-1,n}
	\end{bmatrix}
	\otimes
	\begin{bmatrix}
	a_{k\pi^{-1}(1)} & \ldots &a_{k\pi^{-1}(n)}
	\end{bmatrix}.
\]
Hence, the map $\sum_{\pi} \sgn(\pi) \Phi_{k,\pi}^*$ maps
\eqref{eq:Tensor} to
\[ 
	c \cdot 
	\begin{bmatrix}
		a_{11} & \ldots & a_{1n}\\
		\vdots & & \vdots\\
		a_{k-1,1} & \ldots & a_{k-1,n}
	\end{bmatrix}
	\otimes \det,
	\text{ where }
\det=\sum_{\pi \in S_n} \sgn(\pi)
	\begin{bmatrix} \pi(1) & \ldots & \pi(n)
	\end{bmatrix}
\]
and 
where the scalar $c$ is $0$ if the numbers $a_{kj} \in [n],\ j=1,\ldots,n$
are not all distinct, and equal to the sign of the permutation $j
\mapsto a_{kj}$ otherwise. 

Let $\Theta_k:((V^*)^{\otimes k})^{\otimes
n} \to ((V^*)^{\otimes k-1})^{\otimes n}$ be the linear map that sends
\eqref{eq:Tensor} to 
\[ 
	c \cdot 
	\begin{bmatrix}
		a_{11} & \ldots & a_{1n}\\
		\vdots & & \vdots\\
		a_{k-1,1} & \ldots & a_{k-1,n}
	\end{bmatrix},
\]
without the determinantal factor.
The natural projection $V^{\otimes k} \to \Wedge^k V$ identifies,
dually, the space of linear functions on $\Wedge^k V$ with the space of
alternating elements of $(V^*)^{\otimes k}$, i.e., tensors
which change with $\sgn(\sigma)$ if a permutation $\sigma \in S_k$ is
applied to the tensor factors. We take $F_n:=\det^{\otimes
n}$ as defining equation for $H_n \subseteq (\Wedge^n V)^n$. Define
$F_{n-1},\ldots,F_0$ inductively by $F_{k-1}:=\Theta_k F_k$. Each $F_k$
is an $n$-linear function on $(\Wedge^k V)^n$, so that its zero set
$H_k$ contains all $n$-tuples for which some entry is zero. Moreover,
by construction a linear combination of the pull-backs $\Psi_{k,\pi}^*
F_k=\Phi_{k,\pi}^* F_k$ over all $\pi$ equals $F_{k-1} \otimes \det$. This
implies the second requirement on the $H_k$. The only thing that could
go wrong is that $F_k=0$, in which case $H_k$ is the entire space rather
than a hypersurface. Hence we need to verify that
\[ \Theta_1 \circ \Theta_2 \circ \cdots \circ \Theta_n F_n
\neq 0. \]
In $F_n$ only tensors of the form
\begin{equation}
	\begin{bmatrix}
		a_{11} & \ldots & a_{1n}\\
		\vdots & & \vdots \\
		a_{n1} & \ldots & a_{nn}
	\end{bmatrix}
\end{equation}
appear in which all columns are permutations of $[n]$, and
then with coefficient equal to the product of the signs of these
permutations. Moreover, $\Theta_1\cdots \Theta_n$ kills such a tensor
unless each of its rows is also a permutation of $[n]$. Thus only
Latin squares contribute to the expression above, and each Latin square
contributes exactly its sign. Hence we find that
\[ \Theta_1 \circ \cdots \circ \Theta_n F_n =
\ELS(n)-\OLS(n), \]
and this proves Theorem~\ref{thm:Even}. 
\end{proof}

\begin{re}
Of course $\Theta_n F_n$ is just some non-zero scalar multiple of a
suitable determinant. Next we claim that $\Theta_{n-1} \Theta_n F_n$ is
zero for odd $n$ and non-zero for even $n$. To prove both
statements note that
\begin{equation} \label{eq:Tensor3}
	\begin{bmatrix}
		a_{11} & \ldots & a_{1n}\\
		\vdots & & \vdots \\
		a_{n-2,1} & \ldots & a_{n-2,n}
	\end{bmatrix}
\end{equation}
can have a non-zero coefficient in $\Theta_{n-1} \Theta_n F_n$ only if
it can be extended to a tensor
\[
	\begin{bmatrix}
		a_{11} & \ldots & a_{1n}\\
		\vdots & & \vdots \\
		a_{n1} & \ldots & a_{nn}
	\end{bmatrix}
\]
in which the last two rows {\em and} all columns are permutations
of $[n]$. The coefficient of \eqref{eq:Tensor3} in $\Theta_{n-1}
\Theta_n F_n$ is then the product of these $n+2$ signs, summed over all
possible extensions. For odd $n$ interchanging the last two rows gives a
sign-reversing involution on the set of extensions, hence $\Theta_{n-1}
\Theta_n F_n$ is zero, as claimed. For $n$ even consider, for instance,
the tensor
\[
	\begin{bmatrix}
		1 & 2 & \ldots & n\\
		2 & 3 & \ldots & 1\\
		\vdots & \vdots & & \vdots \\
		n-2 & n-1 & \ldots & n-3\\
	\end{bmatrix}.
\]
It has two possible extensions, with last two rows equal to
\[ \begin{bmatrix}
	n-1 & n & \ldots & n-2\\
	n & 1 & \ldots & n-1
\end{bmatrix} \text{ and }
\begin{bmatrix}
	n & 1 & \ldots & n-1\\
	n-1 & n & \ldots & n-2\
\end{bmatrix}, \]
respectively. The product of the $2$ row signs is the same for both
extensions, and the product of the $n$ column signs is multiplied by
$(-1)^n$, which is an even power. Hence the tensor above has a coefficient with
absolute value $2$ in $\Theta_{n-1} \Theta_n F_n$. This means that
$H_{n-2}$ is a proper hypersurface (provided that $\cha K
\neq 2$). With a bit more effort, using the
fact that the Grassmannian of $(n-2)$-subspaces of $V$ is cut out from
$\Wedge^{n-2} V$ by quadratic polynomials one can show that $H_{n-2}$
does not contain all $n$-tuples of pure $(n-2)$-tensors, so that a
sufficiently general $n$-tuple of $(n-2)$-spaces can be extended with the last
two rows. In particular, this readily implies that given a $3 \times
n$-array whose rows are bases of $K^n$ with $n$ even and $\geq 4$,
there are always three permutations leading to an array in which the
columns are independent triples. 
\end{re}

A similar argument shows the following.

\begin{prop}
Let $\ell, m$ be natural numbers with $\ell>1$ and set $n:=\ell \cdot m$.
If the characteristic of $K$ does not divide $\ELS(m)-\OLS(m)$, then
there exists an online algorithm that sequentially takes $m+1$ bases of
$V=K^n$ as input and immediately after reading the $i$-th basis arranges
it as the $i$-th row in an $(m+1) \times n$-array such that each column
of the array eventually consists of $m+1$ linearly independent vectors.
\end{prop}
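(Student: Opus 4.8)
The plan is to mimic the proof of Theorem~\ref{thm:Even} almost verbatim, but working in exterior powers of $V=K^n$ with $n=\ell m$ and building only $m+1$ hypersurfaces $H_0,\ldots,H_{m+1}$, one for each row. The online algorithm and the two defining properties of the $H_k$ are exactly as before: $H_k \subseteq (\Wedge^k V)^n$ should contain all $n$-tuples with a zero entry, and for each $k \le m+1$ we need
\[
\bigcap_{\pi \in S_n} \Psi_{k,\pi}^{-1} H_k \subseteq \bigl(H_{k-1} \times V^n\bigr) \cup \bigl((\Wedge^{k-1} V)^n \times D\bigr),
\]
so that any new basis can be matched to keep us outside $H_k$. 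As in the even case, the terminal condition ``outside $H_{m+1}$'' forces all $m+1$ entries of the final tuple to be nonzero pure $(m+1)$-tensors, i.e.\ the columns are independent $(m+1)$-sets.

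The construction of the $H_k$ is where the divisor $n=\ell m$ enters. I would again pass to ordinary tensors, using the bracket notation \eqref{eq:Tensor} with $n$ columns and $k$ rows, and the maps $\Phi_{k,\pi}^*$ and $\Theta_k$ defined exactly as in the proof of Theorem~\ref{thm:Even}: $\Theta_k$ peels off the bottom row, keeping the $c$-factor (the sign of the bottom row if it is a permutation of $[n]$, else $0$). The top-level form is now
\[
F_{m+1} := \det{}^{\otimes n} \in \bigl((V^*)^{\otimes (m+1)}\bigr)^{\otimes n},
\]
and one sets $F_{k-1} := \Theta_k F_k$ for $k=m+1,\ldots,1$. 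The linear-combination identity $\sum_\pi \sgn(\pi)\,\Phi_{k,\pi}^* F_k = F_{k-1} \otimes \det$ holds by the same bracket computation, so the only thing to check is nonvanishing: $F_0 = \Theta_1 \cdots \Theta_{m+1} F_{m+1} \neq 0$, which in turn gives (via the alternating-tensor identification) that each $H_k$ is a genuine hypersurface and not the whole ambient space.

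So the crux — and the step I expect to be the real obstacle — is to identify $\Theta_1 \cdots \Theta_{m+1}\,\det^{\otimes n}$ combinatorially and show it is nonzero under the hypothesis $\cha K \nmid \ELS(m)-\OLS(m)$. The bracket arrays surviving all the $\Theta_k$'s are exactly the $(m+1)\times n$ arrays whose $n$ columns are permutations of $[n]$ and whose bottom $m+1$ rows are permutations of $[n]$; the contribution of each is the product of its $n+(m+1)$ column- and row-signs. The key reduction is that the top row is redundant: once columns and the bottom $m$ rows are fixed, the top row is determined, so we are really summing a sign over $(m+1)\times n$ ``partial Latin rectangles'' of a special type. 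Here I would exploit the divisibility $n=\ell m$: group the $n$ columns into $\ell$ blocks of size $m$, so that within each block the bottom $m$ rows restrict to an $m\times m$ square, and argue — by a block-diagonal / direct-sum decomposition of the counting problem, accounting for the sign bookkeeping of merging $\ell$ blocks and of the forced top row — that the total sum factors as a nonzero universal constant times a power (or product of shifted copies) of $\ELS(m)-\OLS(m)$. Getting the sign of the top row and the inter-block permutation signs to bundle into a single clean $\pm 1$ factor is the delicate combinatorial bookkeeping; the parenthetical remark ``A similar argument shows the following'' signals that, modulo this bookkeeping, everything else is a transcription of the even-$n$ proof.
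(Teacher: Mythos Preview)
Your proposal has a genuine error at the point where you define the top-level form. You set $F_{m+1} := \det^{\otimes n}$ and place it in $((V^*)^{\otimes(m+1)})^{\otimes n}$, but $\det$ is an element of $(V^*)^{\otimes n}$ (the $n$-linear alternating form on $V=K^n$), so $\det^{\otimes n}$ lives in $((V^*)^{\otimes n})^{\otimes n}$, exactly as in Theorem~\ref{thm:Even}, not in a space of $(m+1)$-row tensors. There is no obvious substitute: for $m+1<n$ there is no canonical ``determinant'' of an $(m+1)$-tuple in $K^n$. The error propagates to your combinatorics: you describe the surviving bracket arrays as ``$(m+1)\times n$ arrays whose $n$ columns are permutations of $[n]$'', but a column of length $m+1<n$ cannot be a permutation of $[n]$, so that description is vacuous.

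The paper does \emph{not} start a fresh chain at level $m+1$. It keeps the full chain $F_n=\det^{\otimes n}$, $F_{k-1}=\Theta_k F_k$ from Theorem~\ref{thm:Even} and argues that it suffices to show $F_{n-m}\neq 0$. For this one only needs to exhibit a \emph{single} basis tensor of shape $(n-m)\times n$ whose coefficient in $F_{n-m}$ is nonzero; that coefficient is the signed count of extensions to $n\times n$ arrays in which all $n$ columns and the last $m$ rows are permutations of $[n]$. The divisibility $n=\ell m$ enters only in the \emph{choice} of this witness: take a block form $[A_1\,A_2\,\cdots\,A_\ell]$ where the $m$ columns of $A_j$ each contain precisely the $n-m$ symbols $[n]\setminus\{(j-1)m+1,\ldots,jm\}$. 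Then in any admissible extension the last $m$ rows of block $j$ must form an $m\times m$ Latin square on the block's own $m$ symbols, independently for each $j$, and the signed count collapses to $\pm(\ELS(m)-\OLS(m))^\ell$. So the $\ell$-th power you anticipated does appear, but it arises from a single well-chosen coefficient of $F_{n-m}$, not from computing $\Theta_1\cdots\Theta_{m+1}$ of a nonexistent $(m+1)$-row determinant; there is no ``top row is redundant'' step and no global factorisation of $F_0$.
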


\begin{proof}
As in the previous remark, it suffices to show that $F_{n-m}$ is
non-zero, and for this it suffices to exhibit 
a basis vector in $((V^*)^{\otimes (n-m)})^{\otimes n}$
that has a non-zero coefficient in $F_{n-m}$. For this, take any basis
vector of the form
\[ \begin{bmatrix} A_1 & A_2 & \ldots & A_\ell \end{bmatrix}
\]
where all columns of $A_j$ contain exactly the $n-m$ numbers
$\{1,\ldots,n\} \setminus \{(j-1)m+1,\ldots,jm\}$. The coefficient in
$F_{n-m}$ is the sum, over all extensions to $n \times n$-arrays in which
all the last $m$ rows and all columns are permutations, of the products
of their $n+m$ signs. Up to a sign, this is just $(\ELS(m)-\OLS(m))^\ell$,
hence non-zero in $K$ by assumption.
\end{proof}

For some values of $n$ this proposition gives results sharper than those
in \cite{Geelen07}, but only for matroids representable over fields of
characteristic zero or sufficiently large.

\section{Odd dimensions} \label{sec:Odd}

In this section we prove Theorem~\ref{thm:Odd}. So we assume that $n$
is odd and that $K$ contains primitive $m$-th roots of unity for all
odd $m \leq n$, and we will show that no online algorithm exists for
arranging the basis vectors.

\begin{proof}[Proof of theorem~\ref{thm:Odd}.]
We argue that we can force any online algorithm for
choosing the row permutations into making an error. For this, we first
feed the algorithm $n-2$ times the standard basis $e_1,\ldots,e_n$.
If it does not make an error yet, then it will arrange these bases as
the rows of an $(n-2) \times n$-array in which each column consists of
$n-2$ distinct standard basis vectors.  For $l=1,\ldots,n$ let $V_l$
denote the space spanned by the vectors in the $l$-th column.

Now we construct a graph with vertex set $[n]$ and an edge
$\{i,j\}$ for
every $l$ with $e_i,e_j \not \in V_l$. This graph has $n$ edges, some
of which may be double, as some of the $V_l$ may coincide. Each $e_i$
is missing from exactly two of the $V_l$, so the graph is regular of degree
$2$ and hence a union of cycles. Since $n$ is odd, the graph has an
$m$-cycle for some odd $m \leq n$. Without loss of generality, we may
assume that for $l=1,\ldots,m-1$ the space $V_l$ misses exactly $e_l$ and
$e_{l+1}$ and that $V_m$ misses exactly $e_m$ and $e_1$.

We produce an $(n-1)$-st basis $v_1,\ldots,v_n$ for the algorithm to
arrange, as the columns of the following block matrix:
\[ \begin{bmatrix} Y & 0 \\ 0 & Z \end{bmatrix}, \]
where $Y$ has size $m \times m$ and $Z$ has size $(n-m)
\times (n-m)$, and where
\[ Y=\begin{bmatrix}
\zeta & \zeta^2 & \ldots & \zeta^m \\
\zeta^2 & \zeta^4 & \ldots & \zeta^{2m}\\
\vdots & \vdots & & \vdots\\
\zeta^m & \zeta^{2m} & \ldots & \zeta^{m^2}
\end{bmatrix}
\]
with $\zeta$ a primitive $m$-th root of unity, so that $\det
Y$ is non-zero. It does not matter what
$Z$ is, as long as it is also invertible, so that
$v_1,\ldots,v_n$ are legitimate input to the algorithm.

Since $V_{m+1},\ldots,V_n$ already contain the vectors
$v_1,\ldots,v_m$, the latter must be assigned
in some order to $V_1,\ldots,V_m$. Suppose that $V_l$ gets the
$\pi(l)$-th vector $v_{\pi(l)}$, where $\pi \in S_n$ stabilises the sets
$[m]$ and $\{m+1,\ldots,n\}$. Set $V_l':=V_{l} \oplus
\langle v_{\pi(l)} \rangle$,
where the sum is, indeed, direct if the algorithm makes no
error yet.

For $l=1,\ldots,m$, let $z_l \in V^*$ be a normal vector of the
space $V_l'$. Thus $z_l$ annihilates
all standard basis vectors except $e_l$ and $e_{l+1}$ (index
modulo $m$ with offset $1$), and is hence of the form $a x_l + b x_{l+1}$. The scalars $a$ and $b$ are determined by the fact that $z_l$ also
annihilates the vector with first $m$ entries $\zeta^{i \cdot \pi(l)},
i=1,\ldots,m$. Thus we find that $a \zeta^{l \cdot \pi(l)} + b \zeta^{(l+1) \cdot \pi(l)} = 0$, and we can choose
\[ z_l=\zeta^{\pi(l)} x_l - x_{l+1}. \]
We claim that $z_1,\ldots,z_m$ are linearly dependent. To
see this, we compute the determinant of their coefficients
with respect to $x_1,\ldots,x_m$:
\[ 
\det \begin{bmatrix}
     \zeta^{\pi(1)} & & & & -1\\
-1                  & \zeta^{\pi(2)} &        &                 & \\
&-1              & \ddots &                 & \\
&& \ddots &\zeta^{\pi(m-1)} & \\
&& &-1               & \zeta^{\pi(m)}
\end{bmatrix}
=\zeta^{\pi(1)+\pi(2)+\cdots+\pi(m)} - 1.
\] 
Here the minus sign is in fact independent of the parity of $m$: for $m$
odd, the $m$-cycle is an even permutation but the number of minus signs
is odd, while for $m$ even, the $m$-cycle is odd but the number of minus
signs is even. Now the exponent equals
\[
\pi(1)+\pi(2)+\cdots+\pi(m)=1+2+\cdots+m=\frac{1}{2}m(m+1). \]
Since $m$ is odd, the latter number is a multiple of $m$, and hence the determinant is zero.

Since $z_1,\ldots,z_m$ are linearly dependent, we find that $V_1' \cap
\cdots \cap V_m'$ has co-dimension strictly larger than $m$. Then $V_1'
\cap \cdots \cap V_n'$ has positive dimension. If we choose,
for the $n$-th basis, any basis containing a vector in that
intersection, the algorithm is forced to make an error.
This concludes the proof of Theorem~\ref{thm:Odd}.
\end{proof}

\begin{re}
For $n=3$, the above can be shown to be the only counterexample to
$\OBC_3$ up to symmetries, by a computation similar to the one in
\cite[Proof of Theorem 2]{Chow95}. So $\OBC_3(K)$ holds for fields $K$
not containing primitive cube roots of unity, such as $K=\RR$.  We did
not study the validity of $\OBC_n(\RR)$ for other odd values of $n$.
\end{re}

We conclude this section with a relation between the generalisation from
\cite{Zappa97} of $\ATLSC_n$ to odd $n$ and a special case of $\OBC_n$. A
different relation between a variant of $\ATLSC$ and a weakening of $\RBC$
for odd $n$ was established in \cite{Aharoni11}.

\begin{prop}
Consider only $n \times n$ Latin squares that have all
diagonal entries equal to the symbol $n$. If the number of even Latin squares minus the number of
odd Latin squares among these is non-zero when regarded as an element of
$K$, then there exists an online algorithm that sequentially takes $n$
bases of $V=K^n$, each of which contains the standard basis vector $e_n$,
and arranges these as the rows of an $n \times n$-array whose columns
are also bases.
\end{prop}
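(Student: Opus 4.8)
The plan is to transcribe the proof of Theorem~\ref{thm:Even} almost verbatim, making only the changes forced by the two extra features of the present situation: every input basis contains $e_n$, and hence (since a column containing two copies of $e_n$ cannot be a basis, and there are $n$ copies of $e_n$ distributed over $n$ columns) every column of the finished array contains $e_n$ exactly once. As there, we may assume $K$ algebraically closed. The algorithm I construct commits, before reading anything, to placing the copy of $e_n$ coming from the $i$-th basis into the $i$-th column. Since $e_n$ occurs exactly once in each given basis, we may relabel the vectors within each row --- this is harmless, as the relabelling is absorbed into the eventual row permutation $\pi_i$ --- so that $e_n$ is the $n$-th vector of that basis; the only freedom left at step $k$ is then a permutation $\pi_k$ with $\pi_k(k)=n$ distributing the other $n-1$ vectors over the $n-1$ columns different from $k$.

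Following the even case, I keep $F_n:=\det^{\otimes n}$, so that $H_n\subseteq(\Wedge^n V)^n$ is again the locus where some column fails to be a basis, but I replace the maps $\Theta_k$ by maps $\Theta'_k$ that agree with $\Theta_k$ except that every term of the input tensor~\eqref{eq:Tensor} with $a_{kk}\neq n$ is killed, and I set $F_{k-1}:=\Theta'_k F_k$. Each $F_k$ is still $n$-linear, so each $H_k$ still contains every tuple with a zero entry. The identity that makes the descent work is the analogue of the one used for Theorem~\ref{thm:Even}: summing the pull-backs $\Psi_{k,\pi}^*F_k$ only over those $\pi$ with $\pi(k)=n$, weighted by $\sgn(\pi)$, yields $F_{k-1}\otimes\dett$ up to a sign depending only on $k$, where $\dett$ is here the determinant in the first $n-1$ coordinates, regarded as a function of the $n-1$ vectors assigned to the columns different from $k$. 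Since $u_1,\ldots,u_{n-1}$ together with $e_n$ form a basis of $V$ exactly when $\dett(u_1,\ldots,u_{n-1})\neq 0$, the descent step is then literally the one from Theorem~\ref{thm:Even}: if the current $n$-tuple of $(k-1)$-dimensional spaces avoids $H_{k-1}$, then for every basis of $V$ whose $n$-th vector is $e_n$ there is an admissible $\pi_k$ taking us outside $H_k$. The algorithm starts at $(1,\ldots,1)\notin H_0$, and after $n$ steps it lies outside $H_n$, so all columns are bases.

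It remains to check that no $H_k$ is all of $(\Wedge^k V)^n$, i.e.\ that $F_0=\Theta'_1\circ\cdots\circ\Theta'_n(\det^{\otimes n})\neq 0$; since a vanishing $F_k$ would force $F_0=0$, this one value suffices. Expanding exactly as in the even case, the only arrays $(a_{ij})$ that survive all the $\Theta'_k$ are those in which every row and every column is a permutation of $[n]$, and the conditions $a_{kk}=n$ cut this down to precisely the Latin squares all of whose diagonal entries equal $n$; each such square contributes its own sign. Hence $F_0$ equals, up to an overall sign, the number of even minus the number of odd Latin squares of this restricted kind, which is non-zero in $K$ by hypothesis.

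Apart from this the argument is a purely formal transcription of the proof of Theorem~\ref{thm:Even}, and the only step where I expect real care to be needed is the sign bookkeeping in the identity above: one must verify that restricting the sum over $S_n$ to the coset $\{\pi:\pi(k)=n\}$ and contracting the $k$-th tensor slot genuinely produces $F_{k-1}\otimes\dett$, i.e.\ that the only array-dependent sign surviving this operation is the sign of the $k$-th row, which is exactly what $\Theta'_k$ absorbs into $F_{k-1}$. That is the main --- if still routine --- obstacle.
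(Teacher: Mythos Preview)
Your proposal is correct and follows essentially the same route as the paper's own proof: fix in advance that the $k$-th copy of $e_n$ goes into column $k$, restrict the permutations accordingly, define $F_{k-1}$ from $F_k$ via a modified $\Theta_k$ that additionally enforces $a_{kk}=n$, and observe that the resulting $F_0$ is the signed count of diagonal-$n$ Latin squares. The only cosmetic difference is that the paper first projects the non-$e_n$ vectors into $W=K^{n-1}$ and indexes them by $[n]\setminus\{k\}$ with $\pi(k)=k$, whereas you keep them in $V$, label $e_n$ as the $n$-th vector, and use $\pi(k)=n$; the two conventions differ by a fixed transposition, which is exactly the ``sign depending only on $k$'' you flagged.
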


\begin{proof}
Write $W:=K^{n-1}$, so that $V=W \oplus \langle e_n \rangle$. The
algorithm will first replace each basis vector unequal to $e_n$ by its
projection in $W$. The proof is now identical to that of 
Theorem~\ref{thm:Even}, except that
in the $k$-th row we put the copy of $e_n$ on position $k$. Hence for
each $k \in [n]$ and every $\pi \in S_n$ with $\pi(k)=k$ we define
\begin{align*} &\Phi_{k,\pi}: (V^{\otimes k-1})^n \times
W^{[n]\setminus\{k\}} \to
(V^{\otimes k})^n,\\ 
&((\omega_j)_j,(w_j)_j) \mapsto 
(\omega_1 \otimes w_{\pi(1)},\ldots,\omega_k \otimes
e_n,\ldots,\omega_n \otimes w_{\pi(n)}). \end{align*}
Then $\Phi_{k,\pi}^*$ maps a tensor \eqref{eq:Tensor} to $0$
if $a_{kk} \neq n$ or if $a_{kj}=n$ for some $j \neq k$, 
and to 
\[
\begin{bmatrix}
a_{11} & \ldots & a_{1n}\\
\vdots & & \vdots \\
a_{k-1,1} & \ldots & a_{k-1,n}
\end{bmatrix}
\otimes
\begin{bmatrix}
a_{k\pi^{-1}(1)} & \ldots & \widehat{a_{k\pi^{-1}(k)}} & \ldots &a_{k\pi^{-1}(n)}
\end{bmatrix}
\]
(with $\hat{\cdot}$ indicating that that factor is left out) otherwise. 
Consequently, the linear combination $\sum_{\pi \in S_n: \pi(k)=k}
\sgn(\pi) \Phi^*$ maps a tensor \eqref{eq:Tensor} to
\[
	c \cdot 
	\begin{bmatrix}
		a_{11} & \ldots & a_{1n}\\
		\vdots & & \vdots\\
		a_{k-1,1} & \ldots & a_{k-1,n}
	\end{bmatrix}
	\otimes \dett
\]
where
\[ 
\dett=(-1)^{n-k} \sum_{\sigma \in S_{n-1}} \sgn(\sigma)
	\begin{bmatrix} \sigma(1) & \ldots & \sigma(n-1)
	\end{bmatrix}
\]
and where $c=\sgn(\tau)$ if the map $\tau: j \mapsto a_{kj}$ is a
permutation of $[n]$ with $\tau(k)=n$, and $c=0$ otherwise. The factor
$(-1)^{n-k}$ in $\dett$ accounts for the relation $\pi=(k,k+1,\ldots,n)
\tau$.  Let
\[ \Theta_k:((V^*)^{\otimes k})^{\otimes n} \to 
((V^*)^{\otimes k-1})^{\otimes n} \]
be the linear map taking \eqref{eq:Tensor} to the previous
expression without the factor $\det_{n-1}$. Starting with $F_n$ as in
Section~\ref{sec:Even} and defining $F_{k-1}:=\Theta_k F_k$, we find that
$F_0$ equals the sum of the signs of all Latin squares in the statement
of the proposition.
\end{proof}

\section{Further remarks} \label{sec:Open}

Both $\ATLSC$ and $\RBC$ are notoriously difficult conjectures, and our
results do not shed new light on these conjectures. But our results do
lead to a number of problems that might be more tractable than $\ATLSC$
and $\RBC$ themselves.

First, our construction in the previous section showed that for odd $n$
any online algorithm can be forced to make an error in the very last
row. Are there examples where the algorithm is forced to make an error
earlier? Or, on the contrary, can one formulate a combinatorial conjecture
\`a la $\ATLSC$ which, using algebro-geometric techniques, would imply the
existence of an online algorithm for arranging any $(n-1) \times n$-array?

Second, one can try to formulate an online version of Rota's basis
conjecture for general matroids, where the algorithm does not know 
the entire matroid on $n^2$ elements in advance, but rather the matroid
structure is disclosed row by row and the algorithm is required to
arrange the rows immediately when they become available.  The previous
section shows that this generalisation fails for odd rank. But in fact,
it fails for {\em any} $n\geq 3$, for the following simple reason. First
consider the {\em uniform} matroid $M$ of rank $n$ on $(n-1)n$ elements,
arranged in an $(n-1) \times n$-array $(e_{ij})_{ij}$, whose rows are
of course bases. Since this matroid is preserved by the entire group
$(S_n)^{n-1}$ of row permutations, we may assume that the algorithm leaves
each $e_{ij}$ in place. Then one can verify that $M$ has a one-element
extension $M \cup \{e\}$ in which the only dependent sets of size $n$
are $\{e_{ij} \mid i=1,\ldots,n-1\} \cup \{e\}$ for $j=1,\ldots,n$; here
we use that $n>2$. If we feed the algorithm an $n$-th basis containing
an $e$ such that $M \cup \{e\}$ has this structure, then it is forced
to make an error. These counterexamples can, in fact, be realised
as linear matroids over suitable fields. The difference with $\OBC_n$
is that the algorithm gets only the matroid as input, not its realisation.

So $\OBC_n$ does not have a meaningful direct generalisation to $n \times
n$-arrays. However, we think that the following problem (which asks for
an online variant of \cite{Geelen07}) may well be tractable.

\begin{prb}
Given $n$, what is the maximal value of $k$ such that there exists an
online algorithm that on sequential input of $k$ bases of a rank-$n$
matroid arranges each basis immediately as the next row in a rectangular
table, in such a way that in the resulting $k \times n$-table the columns
are independent sets?
\end{prb}

Third, a generalisation of $\RBC_n$ that we learned from a referee
concerns an $n \times n$-array of vectors in a vector space of
dimension $d \geq n$. It states that if each row consists of $n$
independent vectors, then the vectors can be permuted within their rows
such that afterwards each column consists of $n$ independent vectors,
as well. The Latin square analogue of this generalisation was proved
in \cite[Theorem 1]{Chow95}, just before Galvin proved the stronger
Dinitz conjecture \cite{Galvin94}.  For $d=n$ this generalisation is
just $\RBC_n$. Conversely, if the field is infinite (or sufficiently
large), then any instance of the generalisation can be transformed into
an instance of $\RBC_n$ by choosing a projection into $n$-space that keeps
the rows independent. So the generalisation is equivalent to $\RBC_n$ and
follows from $\ATLSC_n$, again for sufficiently large fields. However,
the reduction via a projection does not settle the {\em online} status
of this generalisation, since to choose the projection one needs to
know at least the spans of each of the $n$ rows beforehand. So a natural
question is whether, nevertheless, $\ATLSC_n$ implies the existence of
an online algorithm for this generalisation.


\end{document}